\documentclass[11pt,reqno]{amsart}

\usepackage[T1]{fontenc}
\usepackage{lmodern}
\usepackage{amsmath,amssymb,amsthm}
\usepackage{mathtools}
\usepackage[expansion=false]{microtype}
\usepackage{booktabs}
\usepackage[margin=1.12in]{geometry}
\usepackage{xurl}
\usepackage[hidelinks]{hyperref}
\usepackage{etoolbox}

\makeatletter
\patchcmd{\@setauthors}{\MakeUppercase{\authors}}{\authors}{}%
  {\PackageError{authorcase}{Unable to preserve author capitalization}{}}
\makeatother

\DeclareFontFamily{U}{wncy}{}
\DeclareFontShape{U}{wncy}{m}{n}{<-> wncyr10}{}
\DeclareSymbolFont{cyrletters}{U}{wncy}{m}{n}
\DeclareMathSymbol{\Sha}{\mathord}{cyrletters}{"58}

\newcommand{\Q}{\mathbb Q}
\newcommand{\Z}{\mathbb Z}
\newcommand{\R}{\mathbb R}
\newcommand{\F}{\mathbb F}
\newcommand{\Sel}{\operatorname{Sel}}
\newcommand{\rk}{\operatorname{rank}}
\newcommand{\corank}{\operatorname{corank}}
\newcommand{\Reg}{\operatorname{Reg}}
\newcommand{\tors}{\mathrm{tors}}
\newcommand{\an}{\mathrm{an}}

\newcommand{\hphi}{\widehat\phi}
\newcommand{\eps}{\varepsilon}
\DeclareMathOperator{\ord}{ord}

\newtheorem{theorem}{Theorem}[section]
\newtheorem{proposition}[theorem]{Proposition}
\newtheorem{lemma}[theorem]{Lemma}
\newtheorem{corollary}[theorem]{Corollary}
\theoremstyle{definition}
\newtheorem{hypothesis}[theorem]{Hypothesis}

\theoremstyle{remark}
\newtheorem{remark}[theorem]{Remark}
\numberwithin{equation}{section}

\title[Algebraic and analytic ranks of twin-prime elliptic curves]{On the algebraic and analytic ranks of the\newline twin-prime elliptic curve $y^2=x(x-2)(x-p)$}
\author{Ruihan Chen}
\date{October 7, 2026}
\subjclass[2020]{Primary 11G05; Secondary 11G40}
\keywords{Elliptic curves, twin primes, Selmer groups, Birch--Swinnerton-Dyer conjecture, Cassels--Tate pairing}
\hypersetup{
 pdftitle={On the algebraic and analytic ranks of the twin-prime elliptic curve y^2=x(x-2)(x-p)},
 pdfauthor={Ruihan Chen},
 pdfsubject={A conditional application of low-corank BSD to a classical Selmer calculation},
 pdfkeywords={elliptic curves, Selmer groups, BSD, twin primes}
}

\begin{document}

\begin{abstract}
Let $p\ge 7$ and suppose that $p$ and $p-2$ are prime. We study
$E_p:y^2=x(x-2)(x-p)$ using the classical $2$-Selmer calculation of
Qiu--Zhang and the Cassels--Tate pairing. These give $2^\infty$-Selmer
corank one for $p\equiv3,5\pmod8$ and corank zero for
$p\equiv7\pmod8$. Assuming the low-corank Birch--Swinnerton-Dyer
statement announced in the October 2026 OpenAI mathematics release, we
deduce equality of the analytic and algebraic ranks in these cases,
finiteness of the full Tate--Shafarevich group, and the exact BSD
leading-term formula. The $2$-primary Tate--Shafarevich group is trivial,
and the remaining factor has odd square order. We identify precisely
the obstruction left in the class $p\equiv1\pmod8$. An appendix gives a
local descent proof of the known Selmer dimensions in the coordinates
used here.
\end{abstract}

\maketitle
\markboth{Ruihan Chen}{\shorttitle}
\enlargethispage{2pt}

\section{Introduction and the external input}

Throughout the paper, $p\ge7$ is a prime such that
$q:=p-2$ is also prime, and
\begin{equation}\label{eq:curve}
 E_p:\quad y^2=x(x-2)(x-p).
\end{equation}
We write $r(E)=\rk E(\Q)$, $r_{\an}(E)=\ord_{s=1}L(E,s)$, and
\[
 \rho_2(E)=\corank_{\Z_2}\Sel_{2^\infty}(E/\Q).
\]
Here $\Sel_{2^\infty}$ denotes the full $2$-power Selmer group with the
usual local Kummer conditions. In particular, $\rho_2(E)$ is not the
dimension of the finite group $\Sel_2(E/\Q)$.

The change of variables $x=X+p$ identifies \eqref{eq:curve} with
\begin{equation}\label{eq:translation}
 Y^2=X(X+q)(X+p).
\end{equation}
This is the positive-sign family studied by Qiu--Zhang \cite{QZ}, with
$q$ the smaller of their two prime parameters. Their results determine
the full $2$-Selmer group and the local arithmetic invariants of this
family. Hatley \cite{Hatley} subsequently studied
\eqref{eq:curve} by $2$-descent. Joshi \cite{Joshi} computed its root
number and formulated the following corrected version of a conjecture
of Beers:
\begin{equation}\label{eq:conjecture}
 r(E_p)=
 \begin{cases}
 0,&p\equiv7\pmod8,\\
 1,&p\equiv3,5\pmod8,\\
 0\text{ or }2,&p\equiv1\pmod8.
 \end{cases}
\end{equation}
The correction is necessary because the assertion that the last rank
is always two has the counterexample $p=73$
\cite[Proposition 5.2.1 and Conjecture 5.3.1]{Joshi}.

The purpose of this note is to spell out a pointwise application of a
recently announced low-corank BSD statement to this classical family.
The finite Selmer calculations are due to Qiu--Zhang; the arithmetic
conclusions below combine those calculations with the announced
statement. We isolate the latter as an explicit hypothesis, so that
the precise dependence of the note does not rest on an implicit
acceptance of a newly released proof.

For an elliptic curve $E/\Q$, let $\omega_E$ be a global minimal
N\'eron differential and put
\[
 \Omega_E=\int_{E(\R)}|\omega_E|.
\]
This is the period over \emph{all} real components. We use the
N\'eron--Tate regulator $\Reg_E$, with $\Reg_E=1$ in rank zero, and
write $c_\ell(E)$ for the Tamagawa number at a finite prime $\ell$.

\begin{hypothesis}[Low-corank BSD at $2$]\label{hyp:bsd}
Let $E/\Q$ be an elliptic curve with $\rho_2(E)\in\{0,1\}$. Then
\[
 r(E)=r_{\an}(E)=\rho_2(E),\qquad \#\Sha(E/\Q)<\infty,
\]
and, writing $r=\rho_2(E)$, one has the exact equality
\begin{equation}\label{eq:generalbsd}
 \frac{L^{(r)}(E,1)}{r!}
 =\frac{\Omega_E\Reg_E\,\#\Sha(E/\Q)
             \prod_{\ell}c_\ell(E)}{\#E(\Q)_{\tors}^{\,2}}.
\end{equation}
\end{hypothesis}

Hypothesis~\ref{hyp:bsd} is the specialization at $2$ of the statement
announced in \cite{OAI}.
All references to this input concern the release available on
October~7, 2026. The unconditional results in Sections~\ref{sec:selmer}
and~\ref{sec:remaining}, and in the appendix, do not use
Hypothesis~\ref{hyp:bsd}.

\begin{theorem}\label{thm:main}
Assume Hypothesis~\ref{hyp:bsd}. Let $p,q$ be as above and suppose
$p\equiv3,5,$ or $7\pmod8$. Then
\begin{equation}\label{eq:mainrank}
 r(E_p)=r_{\an}(E_p)=\rho_2(E_p)=
 \begin{cases}
 1,&p\equiv3,5\pmod8,\\
 0,&p\equiv7\pmod8.
 \end{cases}
\end{equation}
Moreover,
\begin{equation}\label{eq:mainsha}
 \#\Sha(E_p/\Q)<\infty,\qquad
 \Sha(E_p/\Q)[2^\infty]=0,
\end{equation}
and $E_p(\Q)\simeq\Z^r\oplus(\Z/2\Z)^2$, where $r$ is given by
\eqref{eq:mainrank}. If $\Omega_p=\int_{E_p(\R)}|dx/(2y)|$, then
\begin{equation}\label{eq:explicitbsd}
 \frac{L^{(r)}(E_p,1)}{r!}
       =\frac{\Omega_p\Reg_{E_p}}{2}\,\#\Sha(E_p/\Q).
\end{equation}
The order of $\Sha(E_p/\Q)$ is an odd perfect square.
\end{theorem}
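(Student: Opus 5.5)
The plan is to reduce everything to the computation of $\rho_2(E_p)$ and then apply Hypothesis~\ref{hyp:bsd} once. The structural input is the Qiu--Zhang determination of $\Sel_2(E_p/\Q)$ in the coordinates \eqref{eq:translation}, as reproved in the appendix, together with the Cassels--Tate pairing.

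\textbf{Step 1: the $2^\infty$-Selmer corank.} Since $E_p(\Q)[2]=(\Z/2\Z)^2$ contributes $2$ to $\dim_{\F_2}\Sel_2$, I expect $\dim\Sel_2(E_p/\Q)=3$ for $p\equiv3,5\pmod8$ and $\dim\Sel_2=2$ for $p\equiv7\pmod8$. In the second case the $2$-Selmer group is spanned by the image of torsion. The standard exact sequence
\[
0\to E(\Q)[2]\oplus\ldots\to\Sel_2\to\Sel_{2^\infty}[2]
\]
then shows $\Sel_{2^\infty}[2]$ has dimension $2$, coming from $E(\Q)[2^\infty]$. To make this precise I must check that $E_p(\Q)[2^\infty]=E_p(\Q)[2]$, i.e.\ that no rational $4$-torsion exists. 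A point halving $(0,0)$, $(2,0)$ or $(p,0)$ requires the corresponding differences, such as $-2,-p$, to be squares up to the usual criterion, and this fails. Hence $\Sel_{2^\infty}$ is finite and $\rho_2=0$.

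In the odd case, $\dim\Sel_2=3$ gives $\rho_2\le1$. The Cassels--Tate pairing on $\Sha[2^\infty]$ modulo divisibles is alternating and nondegenerate. It forces $\dim\Sel_2-\dim E(\Q)[2]\equiv\rho_2\pmod2$, because the finite part has square order. Therefore $\rho_2=1$, which is also consistent with root number $-1$ from Joshi. This is the step I expect to require the most care: the parity argument has to be applied to the $2^\infty$-Selmer group, and one must verify there is no extra $\Z/4$ torsion inflating $\Sel_2$.

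\textbf{Step 2: apply the hypothesis.} Given $\rho_2\in\{0,1\}$, Hypothesis~\ref{hyp:bsd} yields $r=r_{\an}=\rho_2$, giving \eqref{eq:mainrank}, finiteness of $\Sha$, and \eqref{eq:generalbsd}. Now $\Sha[2^\infty]$ is the quotient of $\Sel_{2^\infty}$ by $E(\Q)\otimes\Q_2/\Z_2$. The counts from Step 1 show that $\Sha[2]$ has dimension $\dim\Sel_2-2-r=0$, so $\Sha[2^\infty]=0$. Together with the torsion computation, Mordell--Weil gives $E_p(\Q)\simeq\Z^r\oplus(\Z/2\Z)^2$; odd torsion is excluded by counting points mod small good primes. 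Finiteness and nondegeneracy of the Cassels--Tate pairing make $\#\Sha$ a square, and since it is odd it is an odd square.

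\textbf{Step 3: explicit constants.} First check that \eqref{eq:curve} is a global minimal model: $c_4$ and $\Delta=16\cdot4p^2q^2$ show it is minimal at odd primes, and at $2$ one uses Tate's algorithm or the known reduction type. This gives $\omega=dx/(2y)$ and $\Omega_E=\Omega_p$.

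Tamagawa numbers are computed as follows. At $p$ and $q$ the reduction is split or nonsplit $I_2$ multiplicative. With full $2$-torsion, $c_\ell=2$ at those primes, or $c_\ell=2$ in both cases for $I_2$. At $2$ the reduction type is determined by Tate's algorithm, and I would take Qiu--Zhang's local invariants. The target is $\prod c_\ell=8$, so that $\prod c_\ell/\#E(\Q)_\tors^2=8/16=1/2$, yielding \eqref{eq:explicitbsd}. Confirming $c_2=2$ against the congruence class is the one genuinely computational point.
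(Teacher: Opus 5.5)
Your overall route is the paper's: Qiu--Zhang's values $s_2(E_p)=1,0$ together with the Cassels--Tate pairing give $\rho_2(E_p)$; Hypothesis~\ref{hyp:bsd} is applied once; the first Kummer sequence gives $\Sha(E_p/\Q)[2]=0$; and $\prod_\ell c_\ell=8$ with $\#E_p(\Q)_{\tors}=4$ gives the constant $1/2$. However, the $p\equiv7\pmod8$ step is wrong as written, and the error is not cosmetic.

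The map $\Sel_2(E/\Q)\to\Sel_{2^\infty}(E/\Q)[2]$ is surjective. Its kernel is exactly the image of $E(\Q)[2^\infty]/2E(\Q)[2^\infty]$: torsion does not survive in $\Sel_{2^\infty}$, whose Mordell--Weil part is $E(\Q)\otimes\Q_2/\Z_2$. Hence $\dim\Sel_{2^\infty}(E_p/\Q)[2]=s_2(E_p)=0$, not $2$, and $\Sel_{2^\infty}(E_p/\Q)=0$ immediately. If that dimension really were $2$, you could not conclude $\rho_2=0$. Parity would only give $\rho_2\in\{0,2\}$, which is precisely the unresolved situation of Proposition~\ref{prop:remaining}.

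For the same reason, the $4$-torsion check plays no role in computing $\Sel_{2^\infty}[2]$, since $\dim A/2A=\dim A[2]$ for every finite abelian $2$-group $A$. You need it only for $E_p(\Q)_{\tors}=(\Z/2\Z)^2$ and the factor $16$. Your halving criterion is correct there; the paper instead uses $\#\widetilde E_p(\F_3)=4$, which also excludes odd torsion. The paper avoids the map $\Sel_2\to\Sel_{2^\infty}[2]$ altogether by comparing the two Kummer sequences directly.

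Second, in the case $p\equiv3,5\pmod8$, ``the finite part has square order'' does not give the parity statement. For example, $\Z/4\Z$ has square order but $\dim_{\F_2}(\Z/4\Z)[2]=1$. What you need is that $\dim_{\F_2}F[2]$ is even, i.e.\ $F\simeq\bigoplus_i(\Z/2^{a_i}\Z)^2$. This follows from nondegeneracy and alternation by splitting off hyperbolic pairs, as in Lemma~\ref{lem:parity}. Then $s_2=\rho_2+\dim_{\F_2}F[2]$ yields $\rho_2\equiv s_2\pmod2$, and with $\rho_2\le s_2=1$ you get $\rho_2=1$.

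With these two repairs the argument is complete. Your remaining steps agree with the paper. Minimality at $2$ is immediate from $v_2(\Delta)=6<12$. Tate's algorithm gives type III with $c_2=2$, because $v_2(b_8)=v_2(-4p^2)=2$, and this holds independently of $p\bmod8$.
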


Thus, relative to the stated input, the rank-one part of
\eqref{eq:conjecture} follows together with the full leading-term
formula. The rank-zero assertion for $p\equiv7\pmod8$ is already
unconditional; the input supplies the analytic and full BSD conclusions.
No assertion that \eqref{eq:conjecture} is completely proved is made:
Section~\ref{sec:remaining} describes the remaining possibility when
$p\equiv1\pmod8$.

\section{Finite Selmer groups and infinite Selmer coranks}\label{sec:selmer}

\subsection{The classical descent calculation}

For an elliptic curve over $\Q$, define its reduced $2$-Selmer
dimension by
\[
 s_2(E)=\dim_{\F_2}\Sel_2(E/\Q)-\dim_{\F_2}E(\Q)[2].
\]
The word ``reduced'' here refers only to the subtraction of the rational
$2$-torsion dimension. It does not mean that any local Selmer condition
has been removed.

\begin{proposition}[Qiu--Zhang]\label{prop:classical}
For the curve $E_p$ one has
\begin{equation}\label{eq:selmerdim}
 \dim_{\F_2}\Sel_2(E_p/\Q)=
 \begin{cases}
 4,&p\equiv1\pmod8,\\
 3,&p\equiv3,5\pmod8,\\
 2,&p\equiv7\pmod8.
 \end{cases}
\end{equation}
In particular, $s_2(E_p)$ is respectively $2,1,0$ in these three cases.
\end{proposition}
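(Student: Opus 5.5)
The plan is a full $2$-descent on $E_p$ in the translated model \eqref{eq:translation}, $Y^2=X(X+q)(X+p)$, with roots $e_1=0$, $e_2=-q$, $e_3=-p$. Let $S=\{2,q,p,\infty\}$. The Kummer map embeds $E_p(\Q)/2E_p(\Q)$ into pairs $(d_1,d_2)\in(\Q(S,2))^2$, where $\Q(S,2)$ is generated by $-1,2,q,p$ modulo squares. The pair $(d_1,d_2)$ is locally soluble at $\ell$ exactly when the system
\[
d_1u^2-d_2v^2=q,\qquad d_1u^2-d_3w^2=p,\qquad d_3=d_1d_2,
\]
has a solution over $\Q_\ell$. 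Since $E_p(\Q)[2]\cong(\Z/2)^2$, the Selmer group contains the images of the three nontrivial $2$-torsion points:
\[
(0,0)\mapsto(pq,q),\qquad (-q,0)\mapsto(-q,-q\cdot 2),\qquad (-p,0)\mapsto(-p,2p).
\]
These are computed from $X-e_j$ evaluated at the point, using the product rule for the remaining coordinate; note $p-q=2$. Hence $\dim\Sel_2\ge2$ in every case, and the task is to find how many further classes survive.

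I will work modulo this $2$-dimensional torsion image and test the remaining candidates locally, one place at a time.
\begin{itemize}
\item At $\infty$: positivity, using $0>-q>-p$, forces sign constraints, namely $d_1>0$ up to torsion.
\item At $q$ and at $p$: $E_p$ has multiplicative reduction, and the local image has dimension $2$. It is cut out by Legendre symbol conditions. The key identities are $\left(\frac{2}{q}\right)$, $\left(\frac{p}{q}\right)=\left(\frac{2}{q}\right)$, $\left(\frac{q}{p}\right)=\left(\frac{-2}{p}\right)$ and $\left(\frac{2}{p}\right)$. Each of these depends only on $p\bmod 8$, because $q\equiv p-2\pmod 8$.
\item At $2$: $E_p$ has additive reduction and the local image has dimension $2$ (by $\dim E(\Q_2)/2=\dim E(\Q_2)[2]+1$). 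It must be determined explicitly by Hensel arguments in terms of $p\bmod 8$ (equivalently $q\bmod 8$), or else by the standard fact that $\Sel_2$ is the intersection of two maximal isotropic subspaces under the local Tate pairing.
\end{itemize}
By quadratic reciprocity and the product formula, once every place other than $2$ is checked, the conditions at $2$ are largely automatic. So in practice I will impose the conditions at $\infty$, $q$ and $p$, then verify solubility at $2$ only for the survivors.

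The case split by $p\bmod 8$ then goes as follows. For $p\equiv7$ the Legendre conditions cut everything down to the torsion image, giving dimension $2$. For $p\equiv3,5$ exactly one extra class survives; I expect it to be something like $(2,\ast)$ or $(-1,\ast)$ times torsion. For $p\equiv1$ all symbols are $+1$, so two extra classes survive, giving dimension $4$. The statement about $s_2$ then follows immediately by subtracting $\dim E_p(\Q)[2]=2$.

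The main obstacle is the local analysis at $2$. Its image depends on $p\bmod 16$ in principle, so I must show the intersection depends only on $p\bmod8$. The first thing I will try is to avoid computing it directly: use the quadratic reciprocity and parity relation that $\Sel_2$ is cut out by maximal isotropic subspaces, so that conditions at $2$ follow from those at the other places. Failing that, I will cite Qiu--Zhang's explicit tables for the family \eqref{eq:translation} with $q<p$, and merely match their coordinates to the translation $x=X+p$.
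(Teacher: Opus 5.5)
There is a genuine gap, and it sits exactly at the place you hoped to bypass, namely the prime $2$.

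\textbf{The local image at $2$ and the relaxed Selmer group.} The Kummer image at $2$ is $3$-dimensional, not $2$-dimensional: $\dim E_p(\Q_2)[2]+1=3$. Consider the group cut out by the conditions at $\infty$, $p$, $q$ together with unramifiedness outside $S$, that is, the Selmer group relaxed at $2$. By Poitou--Tate duality (the Greenberg--Wiles formula), its dimension is exactly $3$ more than that of the Selmer group strict at $2$. So it is at least $3$-dimensional. Your claim that for $p\equiv7\pmod8$ the Legendre conditions alone cut everything down to the $2$-dimensional torsion image is therefore false.

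\textbf{An explicit surviving class.} In your coordinates take $(d_1,d_2)=(2,2)$, so $d_3=1$.
\begin{itemize}
\item It is not a torsion class. The torsion classes are $(pq,q)$, $(-q,-2q)$ and $(-p,-2)$; for the last one you wrote $(d_1,d_3)$.
\item It is positive at $\infty$ and unramified outside $S$.
\item It is locally trivial at $p$, since $\bigl(\frac2p\bigr)=1$.
\item At $q\equiv5\pmod8$, choose a unit $X$ that is a non-square modulo $q$ with $X+2$ a non-zero square; such $X$ exist. Then $X$, $X+q$, $X+p$ have square classes $2,2,1$, which gives a $\Q_q$-point with class $(2,2)$.
\end{itemize}
The class dies only at $2$. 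The equation $X=2u^2$ forces $\ord_2X$ to be odd. If $\ord_2X>0$, then $X+q$ is a unit and cannot equal $2v^2$. If $\ord_2X<0$, then $X+p$ has odd valuation and cannot equal $w^2$.

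For $p\equiv1\pmod8$ the same class passes every condition away from $2$, since $2$ is a square at both $p$ and $q$. So there too more classes survive away from $2$ than end up in $\Sel_2$. The fact that all symbols are $+1$ does not by itself give exactly two extra classes.

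\textbf{Why reciprocity does not rescue this.} Global reciprocity only forces the $2$-adic localization of a candidate to be orthogonal to the localizations of known Selmer classes. The torsion classes span at most $2$ of the $3$ dimensions of the Kummer image at $2$. A genuine $2$-adic computation is therefore unavoidable, and your sketch does not supply one. For $p\equiv3,5\pmod8$ it only records an expectation.

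\textbf{Your fallback and an independent route.} Your fallback, citing Qiu--Zhang after $x=X+p$, is exactly the paper's proof. So your proposal is complete only through that citation.

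For an independent check, the paper's appendix keeps the $2$-adic work minimal by descending along the $2$-isogeny $\phi$ with kernel $\langle(0,0)\rangle$ on $y^2=x(x-2)(x-p)$.
\begin{itemize}
\item The dual group $S_{\hphi}=\{1,2,p,2p\}$ consists entirely of torsion classes.
\item A valuation argument at primes dividing $d$ but not $q^2$ gives $S_\phi\subseteq\{\pm1,\pm q\}$. At $\ell=2$ this is precisely the step that removes $d=2$, whose image in $H^1(\Q,E_p[2])$ is your class $(2,2)$.
\item Obstructions at $p$ and $q$, together with explicit local points, settle the remaining classes.
\item The sequence $0\to S_\phi\to\Sel_2(E_p/\Q)\to S_{\hphi}\to0$ is exact.
\end{itemize}

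Finally, correct the signs in your descent system: it should read $d_2v^2-d_1u^2=q$ and $d_3w^2-d_1u^2=p$.
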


\begin{proof}
The rational $2$-torsion consists of
$O,(0,0),(2,0),(p,0)$ and has dimension two. Under
\eqref{eq:translation}, Theorem~3(2) of the preprint version of
\cite{QZ} gives the Selmer dimensions, indexed by the smaller prime
$q$. Substituting $q=p-2$ gives \eqref{eq:selmerdim}.
For a direct local calculation with a different rational
$2$-isogeny, see Appendix~\ref{app:descent}.
\end{proof}

\subsection{The divisible part of the Tate--Shafarevich group}

We recall the exact sequences
\begin{align}
 0&\longrightarrow E(\Q)/2E(\Q)
   \longrightarrow\Sel_2(E/\Q)
   \longrightarrow\Sha(E/\Q)[2]\longrightarrow0,
   \label{eq:kummerfinite}\\
 0&\longrightarrow E(\Q)\otimes\Q_2/\Z_2
   \longrightarrow\Sel_{2^\infty}(E/\Q)
   \longrightarrow\Sha(E/\Q)[2^\infty]\longrightarrow0.
   \label{eq:kummerinfinite}
\end{align}
These are the ordinary Kummer sequences with the local conditions
imposed; see \cite[Chapter X]{Silverman}.

Set $G=\Sha(E/\Q)[2^\infty]$. This is a cofinitely generated torsion
$\Z_2$-module, so its maximal divisible subgroup is
$D\simeq(\Q_2/\Z_2)^\delta$ for an integer $\delta\ge0$, and
$F=G/D$ is finite. Since divisible abelian groups are injective,
$G\simeq D\oplus F$ as abelian groups. The splitting is not canonical.

\begin{lemma}\label{lem:parity}
For every elliptic curve $E/\Q$, there is an integer $m\ge0$ such that
\begin{equation}\label{eq:parityidentity}
 s_2(E)=\rho_2(E)+2m.
\end{equation}
More precisely,
\begin{equation}\label{eq:finitequotient}
 F\simeq\bigoplus_{i=1}^{m}(\Z/2^{a_i}\Z)^2
 \quad\text{with }a_i\ge1,
 \qquad
 \rho_2(E)=r(E)+\delta.
\end{equation}
No finiteness assumption on $\Sha(E/\Q)$ is required.
\end{lemma}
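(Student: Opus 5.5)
We will compute $\dim_{\F_2}\Sel_{2^\infty}(E/\Q)[2]$ in two ways and compare the two answers. The Cassels--Tate pairing will supply the structure of the finite quotient $F$.

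\emph{Step 1: the $2$-torsion of the infinite Selmer group.} Since $\Sel_{2^\infty}(E/\Q)$ is cofinitely generated, it is isomorphic to $(\Q_2/\Z_2)^{\rho_2}\oplus T$ with $T$ finite. We have $E(\Q)\otimes\Q_2/\Z_2\simeq(\Q_2/\Z_2)^{r}$, and this subgroup is divisible, so the sequence \eqref{eq:kummerinfinite} splits. Together with $G\simeq D\oplus F$ this gives
\[
\Sel_{2^\infty}(E/\Q)\simeq(\Q_2/\Z_2)^{r+\delta}\oplus F .
\]
Hence $\rho_2=r+\delta$, which is the second assertion of \eqref{eq:finitequotient}. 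We also get $\dim\Sel_{2^\infty}[2]=\rho_2+\dim F[2]$.

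\emph{Step 2: compare with $\Sel_2$.} The Kummer map gives a surjection $\Sel_2(E/\Q)\to\Sel_{2^\infty}(E/\Q)[2]$. Its kernel is the image of $E(\Q)[2^\infty]/2E(\Q)[2^\infty]$, because $E(\Q)[2]\to H^1(\Q,E[2])$ comes from the coboundary of $0\to E[2]\to E[2^\infty]\xrightarrow{2}E[2^\infty]\to0$.

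The group $E(\Q)[2^\infty]$ is finite of the form $\Z/2^a\oplus\Z/2^b$, so its reduction mod $2$ has the same $\F_2$-dimension as $E(\Q)[2]$. Therefore
\[
\dim\Sel_2=\dim E(\Q)[2]+\dim\Sel_{2^\infty}[2],
\]
that is, $s_2(E)=\rho_2+\dim F[2]$.

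A quicker alternative uses \eqref{eq:kummerfinite}. It gives $\dim\Sel_2=r+\dim E(\Q)[2]+\dim\Sha[2]$, and $\dim\Sha[2]=\delta+\dim F[2]$ since $G\simeq D\oplus F$. This reaches the same conclusion without needing the surjectivity claim.

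\emph{Step 3: structure of $F$.} This is the main obstacle. The Cassels--Tate pairing on $\Sha(E/\Q)[2^\infty]$ is defined without assuming $\Sha$ is finite. Its kernel is exactly the maximal divisible subgroup $D$; this is Cassels' theorem, as extended by Flach and by Poonen--Stoll. So it induces a nondegenerate pairing $F\times F\to\Q_2/\Z_2$.

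For elliptic curves one can take the pairing built from the principal polarization coming from a rational divisor, namely the origin. Poonen--Stoll then show that the pairing is alternating, not merely antisymmetric: the obstruction class vanishes for elliptic curves.

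A finite abelian $2$-group carrying a nondegenerate alternating pairing is isomorphic to $\bigoplus_i(\Z/2^{a_i})^2$. The proof is the symplectic basis argument: choose an element $x$ of maximal order, pair it with a partner $y$ such that $\langle x,y\rangle$ has exact order $2^{a}$, split off the hyperbolic plane spanned by $x$ and $y$, and induct on the order of $F$.

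This gives \eqref{eq:finitequotient}. Then $\dim F[2]=2m$, and Step 2 yields \eqref{eq:parityidentity}.

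The delicate point is to cite correctly that the kernel of the Cassels--Tate pairing is the divisible part even when $\Sha$ may be infinite, and that the pairing is alternating and not only antisymmetric. In the antisymmetric case, $2$-groups can carry pairings with odd-dimensional $F[2]$. We would rely on Poonen--Stoll for both facts.
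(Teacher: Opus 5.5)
Your proposal is correct and follows essentially the paper's proof. The paper uses exactly your ``quicker alternative'' in Step 2 (counting dimensions in the finite Kummer sequence and using $G\simeq D\oplus F$), together with the alternating Cassels--Tate pairing whose kernel is the divisible part (citing Poonen--Stoll) and the same maximal-order symplectic splitting to get $\dim_{\F_2}F[2]=2m$; your main Step 2 route through $\Sel_2(E/\Q)\to\Sel_{2^\infty}(E/\Q)[2]$ is valid but not needed, and its surjectivity holds because any lift to $H^1(\Q,E[2])$ of a class in $\Sel_{2^\infty}(E/\Q)[2]$ automatically satisfies the local conditions.
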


\begin{proof}
The Cassels--Tate pairing is alternating for elliptic curves, and its
kernel is the maximal divisible subgroup. Its restriction to the
finite $2$-primary quotient therefore gives a nondegenerate alternating
pairing $F\times F\to\Q_2/\Z_2$; see \cite[Section 1]{PS}.

For completeness, a finite abelian $2$-group with such a pairing is an
orthogonal direct sum of pairs of equal cyclic groups. Indeed, choose
$x\in F$ of maximal order $2^a$. Nondegeneracy provides $y\in F$ for
which $\langle x,y\rangle$ has order $2^a$. The subgroup generated by
$x,y$ is isomorphic to $(\Z/2^a\Z)^2$, its restricted pairing is
nondegenerate, and it splits off orthogonally. Induction proves the
first assertion in \eqref{eq:finitequotient}. In particular,
$\dim_{\F_2}F[2]=2m$.

Taking dimensions in \eqref{eq:kummerfinite} and coranks in
\eqref{eq:kummerinfinite} gives
\[
 s_2(E)=r(E)+\dim_{\F_2}G[2]=r(E)+\delta+2m,
 \qquad \rho_2(E)=r(E)+\delta.
\]
Subtracting yields \eqref{eq:parityidentity}.
\end{proof}

\begin{corollary}\label{cor:coranks}
Unconditionally,
\begin{equation}\label{eq:coranktable}
 \rho_2(E_p)=
 \begin{cases}
 0\text{ or }2,&p\equiv1\pmod8,\\
 1,&p\equiv3,5\pmod8,\\
 0,&p\equiv7\pmod8.
 \end{cases}
\end{equation}
If $p\equiv3,5\pmod8$, the only possibilities for
$(r(E_p),\Sha(E_p/\Q)[2^\infty])$ are
\begin{equation}\label{eq:odddichotomy}
 (1,0)\qquad\text{or}\qquad(0,\Q_2/\Z_2).
\end{equation}
If $p\equiv7\pmod8$, then $r(E_p)=0$ and
$\Sha(E_p/\Q)[2^\infty]=0$.
\end{corollary}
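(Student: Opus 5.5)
The plan is to combine Proposition~\ref{prop:classical} with Lemma~\ref{lem:parity} and then do a small case analysis on the integers $s_2(E_p)$, $m$, $r(E_p)$ and $\delta$. Proposition~\ref{prop:classical} gives $s_2(E_p)=2,1,0$ for $p\equiv1$, $3$ or $5$, and $7\pmod 8$ respectively. Lemma~\ref{lem:parity} gives
\[
 s_2(E_p)=\rho_2(E_p)+2m,\qquad \rho_2(E_p)=r(E_p)+\delta,\qquad m\ge0 .
\]
Because $\rho_2(E_p)\ge0$ and $m\ge0$, the first identity pins down $\rho_2$ in each case.

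\textbf{Case $p\equiv7\pmod8$.} Here $s_2=0$, which forces $\rho_2=0$ and $m=0$. Then $r(E_p)=\delta=0$. Since $F\simeq\bigoplus_{i=1}^m(\Z/2^{a_i}\Z)^2$ with $m=0$, the finite quotient $F$ vanishes. The divisible part $D$ also vanishes because $\delta=0$. Hence $G=\Sha(E_p/\Q)[2^\infty]=D\oplus F=0$.

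\textbf{Case $p\equiv3,5\pmod8$.} Here $s_2=1$ is odd, so $m=0$ and $\rho_2=1$. As before, $m=0$ gives $F=0$, so $G=D\simeq(\Q_2/\Z_2)^\delta$. The relation $r(E_p)+\delta=1$ with both terms nonnegative leaves exactly two options. Either $(r,\delta)=(1,0)$, so $G=0$, or $(r,\delta)=(0,1)$, so $G\simeq\Q_2/\Z_2$. This is \eqref{eq:odddichotomy}.

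\textbf{Case $p\equiv1\pmod8$.} Here $s_2=2$, so $(\rho_2,m)$ is $(2,0)$ or $(0,1)$. Hence $\rho_2\in\{0,2\}$.

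There is no real obstacle, since all the content sits in the two quoted results. The one point needing care is that $s_2$ is the reduced dimension: the full $\Sel_2$ dimension includes the two-dimensional rational $2$-torsion. This subtraction is already built into Lemma~\ref{lem:parity}, since the Kummer sequence \eqref{eq:kummerfinite} measures $E(\Q)/2E(\Q)$, whose dimension is $r+2$ here. One must also be careful to read off $G[2]$ as a sum of two contributions: the divisible part contributes $\delta$ dimensions, and $F$ contributes $2m$.
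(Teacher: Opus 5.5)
Your proposal is correct and is essentially the paper's proof: substitute $s_2(E_p)=2,1,0$ from Proposition~\ref{prop:classical} into Lemma~\ref{lem:parity}, then use nonnegativity of $\rho_2$, $m$, $r$, $\delta$ to read off each case. Your explicit use of $G\simeq D\oplus F$ to identify $\Sha(E_p/\Q)[2^\infty]$ with $(\Q_2/\Z_2)^\delta$ when $m=0$ just spells out a step the paper leaves implicit.
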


\begin{proof}
Substitute the values $s_2(E_p)=2,1,0$ into
\eqref{eq:parityidentity}. When $s_2=1$, one must have
$\rho_2=1$ and $m=0$, so $F=0$ and $r+\delta=1$.
When $s_2=0$, all of $\rho_2,m,r,\delta$ vanish. The remaining
case gives $\rho_2\in\{0,2\}$.
\end{proof}

\begin{remark}
This proves the relevant Selmer-corank statement without using a
root-number formula or a $2$-parity theorem. In particular, it avoids
replacing the parity of a Selmer corank by the parity of the
Mordell--Weil rank. The distinction is exactly the second possibility
in \eqref{eq:odddichotomy}.
\end{remark}

\section{The pointwise BSD consequences}\label{sec:bsd}

\subsection{Local factors and period conventions}

\begin{proposition}\label{prop:local}
Equation~\eqref{eq:curve} is globally minimal, with
\begin{equation}\label{eq:invariants}
 \Delta(E_p)=64p^2q^2,\quad
 c_4(E_p)=16(p^2-2p+4),\quad N(E_p)=2^5pq.
\end{equation}
The rational torsion subgroup is $(\Z/2\Z)^2$, and
\begin{equation}\label{eq:tamagawa}
 c_2(E_p)=c_p(E_p)=c_q(E_p)=2,
 \qquad c_\ell(E_p)=1\quad(\ell\notin\{2,p,q\}).
\end{equation}
The differential $\omega_p=dx/(2y)$ is a global minimal N\'eron
differential. The real group $E_p(\R)$ has two connected components.
\end{proposition}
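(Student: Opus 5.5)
The plan is to verify each assertion directly from the Weierstrass data, treating the three kinds of primes $\ell\nmid 2pq$, $\ell\in\{p,q\}$ and $\ell=2$ separately.

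\emph{Invariants.} Write \eqref{eq:curve} as $y^2=x^3-(p+2)x^2+2px$, so $a_1=a_3=0$, $a_2=-(p+2)$, $a_4=2p$, $a_6=0$. For a curve $y^2=x(x-a)(x-b)$ one has $\Delta=16a^2b^2(a-b)^2$ and $c_4=16(a^2-ab+b^2)$. Taking $a=2$, $b=p$ and $a-b=-q$ gives \eqref{eq:invariants} for $\Delta$ and $c_4$.

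\emph{Minimality, reduction types, Tamagawa numbers at odd primes, Néron differential.} Since $v_\ell(\Delta)<12$ for every $\ell$ (it is $6$ at $2$ and $2$ at $p$ and $q$), the model is minimal at every prime. Hence it is globally minimal, and $\omega_p=dx/(2y+a_1x+a_3)=dx/(2y)$ is a global minimal Néron differential. Modulo $p$ we have $p^2-2p+4\equiv4$. Modulo $q$, substituting $p=q+2$ gives $p^2-2p+4=q^2+2q+4\equiv4$. Both primes are odd, so $v_p(c_4)=v_q(c_4)=0$, and the reduction at $p$ and $q$ is multiplicative of type $I_2$. This contributes $pq$ to the conductor. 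Moreover $c_p=c_q=2$, because for $I_n$ with $n$ even the Tamagawa number is $n$ in the split case and $2$ in the nonsplit case. Good reduction at all $\ell\nmid 2pq$ gives $c_\ell=1$.

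\emph{The prime $2$.} This is the step I expect to be the main obstacle, since it needs Tate's algorithm rather than a formula. The reduction is additive because $2\mid c_4$ and $2\mid\Delta$. The plan is as follows.
\begin{itemize}
\item Run Tate's algorithm on $y^2=x^3-(p+2)x^2+2px$. The singular point mod $2$ is already at $(0,0)$, and $a_2=-(p+2)$ is odd. One checks $2\mid a_3,a_4$, $4\mid a_6$, and $b_8=-a_4^2=-4p^2$ has exact valuation $2$, so $4\nmid b_8$.
\item This lands in type $\mathrm{III}$, giving $c_2=2$.
\item Ogg's formula $f_2=v_2(\Delta)+1-m$ with $m=2$ components then gives $f_2=6+1-2=5$, hence $N=2^5pq$.
\item If my recollection of the exact branch point in the algorithm is off, I would instead confirm type $\mathrm{III}$ from $v_2(\Delta)=6$ together with $f_2=5$ computed via the discriminant of a local splitting field.
\end{itemize}

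\emph{Torsion and real components.} The points $O,(0,0),(2,0),(p,0)$ give $(\Z/2\Z)^2\subseteq E_p(\Q)_{\tors}$.
\begin{itemize}
\item \emph{No point of order $4$.} By the standard halving criterion for $y^2=(x-e_1)(x-e_2)(x-e_3)$, the point $(e_i,0)$ lies in $2E(\Q)$ iff $e_i-e_j$ and $e_i-e_k$ are both rational squares. This fails for all three points: the differences include $-2$; $-q$; and $p$, $q$ (not squares since they are primes).
\item \emph{No odd torsion.} Twin primes $p,q\ge5$ force $p\equiv1\pmod3$. Then $3\nmid\Delta$ and $E_p\bmod 3$ is $y^2=x(x+1)(x+2)=x^3-x$, which has exactly $4$ points over $\F_3$. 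Prime-to-$3$ torsion injects into $\tilde E(\F_3)$, and $3$-torsion also injects there (reduction at a prime of good reduction with $e=1<3-1$). Hence no odd torsion survives.
\item \emph{Two real components.} Finally $\Delta>0$, equivalently the cubic has three real roots $0<2<p$, so $E_p(\R)$ has two connected components.
\end{itemize}
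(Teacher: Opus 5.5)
Your proposal is correct and follows essentially the same route as the paper: minimality from $v_\ell(\Delta)<12$, type $I_2$ at $p$ and $q$ because $c_4$ is a unit there, Tate's algorithm at $2$ giving type $\mathrm{III}$ with $c_2=2$ and $f_2=5$, the torsion bound from injecting into the four-point group $\tilde E_p(\F_3)$ via the torsion-free kernel of reduction, and two real components from the three real roots. Your halving-criterion step is harmless but redundant, since the mod-$3$ bound already gives $\#E_p(\Q)_{\tors}\le4$; there is also one slip to fix: the test in Tate's algorithm is $\pi^3\nmid b_8$, so from $v_2(b_8)=2$ you should conclude $8\nmid b_8$, not $4\nmid b_8$.
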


\begin{proof}
The discriminant and $c_4$ follow directly from the Weierstrass
coefficients $a_2=-(p+2)$ and $a_4=2p$. At each prime the
valuation of the discriminant is less than $12$, so the integral
model is minimal. Notice that the discriminant in
\eqref{eq:invariants} is \emph{positive}.

At $p$ and $q$, $c_4$ is a unit and the minimal discriminant has
valuation two. The reduction is therefore of type $I_2$, and the
Tamagawa number is two in both the split and nonsplit cases. At $2$,
Tate's algorithm gives type $\mathrm{III}$, with Tamagawa number two
and conductor exponent five. These local computations are also recorded
in \cite[Theorem 2 and Corollary 1]{QZ}, after
\eqref{eq:translation}. They prove the conductor and
\eqref{eq:tamagawa}.

Since $p,q>3$ are twin primes, $p\equiv1\pmod3$ and
$q\equiv2\pmod3$. Reduction at the good prime $3$ has precisely the
four points $O,(0,0),(1,0),(2,0)$. The kernel of reduction on
$E_p(\Q_3)$ is a torsion-free formal group, so rational torsion
injects into this group of four points. The four rational
$2$-torsion points already account for it. Finally, the three roots
$0,2,p$ are distinct and real, which gives two real components.
\end{proof}

\subsection{Proof of the main theorem}

\begin{proof}[Proof of Theorem~\ref{thm:main}]
Corollary~\ref{cor:coranks} verifies the hypothesis
$\rho_2(E_p)\in\{0,1\}$, with its exact value in each of the three
residue classes. Hypothesis~\ref{hyp:bsd} therefore gives
\eqref{eq:mainrank}, finiteness of the full Tate--Shafarevich group,
and \eqref{eq:generalbsd} for $E_p$.

When $p\equiv3,5\pmod8$, the rank is one. The first Kummer sequence
and Proposition~\ref{prop:classical} give
\[
 \dim_{\F_2}\Sha(E_p/\Q)[2]=s_2(E_p)-r(E_p)=1-1=0.
\]
A nonzero $2$-primary torsion group has a nonzero element of order two;
hence $\Sha(E_p/\Q)[2^\infty]=0$. For $p\equiv7\pmod8$,
this vanishing is already in Corollary~\ref{cor:coranks}.

Proposition~\ref{prop:local} gives
$\prod_\ell c_\ell(E_p)=8$ and $\#E_p(\Q)_{\tors}^2=16$.
Substitution into \eqref{eq:generalbsd} yields
\eqref{eq:explicitbsd}. Since the full Tate--Shafarevich group is
finite, its Cassels--Tate pairing is nondegenerate and alternating;
its order is therefore a square. The vanishing of the $2$-primary
part makes this square odd. The description of $E_p(\Q)$ follows
from the Mordell--Weil theorem and the torsion calculation.
\end{proof}

\begin{corollary}\label{cor:period}
Under the assumptions of Theorem~\ref{thm:main}, put
\[
 \Omega_p^+=\int_{E_p(\R)^0}|\omega_p|=\frac{\Omega_p}{2}.
\]
Then
\begin{equation}\label{eq:oddsquare}
 \frac{L^{(r)}(E_p,1)}{r!\,\Omega_p^+\Reg_{E_p}}
       =\#\Sha(E_p/\Q)
\end{equation}
is an odd positive square integer. With the total real period
$\Omega_p$ instead, the same normalized leading coefficient has
$2$-adic valuation $-1$.
\end{corollary}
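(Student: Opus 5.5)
This is a direct rewriting of \eqref{eq:explicitbsd} from Theorem~\ref{thm:main}. The plan has three steps: verify that $\Omega_p^+=\Omega_p/2$, divide through, and read off the $2$-adic valuations.

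\emph{Step 1: the period.} Proposition~\ref{prop:local} shows that $E_p(\R)$ has two components, because the roots $0,2,p$ are real and distinct. Since $\Delta(E_p)>0$, the real points form the group $\R/\Z\times\Z/2\Z$. Translation by the $2$-torsion point $(0,0)$, which lies on the non-identity component (the oval over $[0,2]$), interchanges the two components. The invariant differential $\omega_p$ is preserved by translation, so $|\omega_p|$ assigns equal mass to each component. Hence $\int_{E_p(\R)^0}|\omega_p|=\Omega_p/2$. This is the only point needing a short argument. I would check the claim that $(0,0)$ lies off the identity component by noting that the identity component is the unbounded branch $x\ge p$.

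\emph{Step 2: the identity.} Substituting $\Omega_p=2\Omega_p^+$ into \eqref{eq:explicitbsd} gives
\[
\frac{L^{(r)}(E_p,1)}{r!}=\Omega_p^+\Reg_{E_p}\,\#\Sha(E_p/\Q).
\]
Both $\Omega_p^+$ and $\Reg_{E_p}$ are positive. For the period this is clear. For the regulator, it equals $1$ in rank zero and is the canonical height of a non-torsion generator in rank one. Dividing by these positive quantities yields \eqref{eq:oddsquare}. Theorem~\ref{thm:main} shows that $\#\Sha(E_p/\Q)$ is finite and is an odd perfect square. It is also positive, being the order of a finite group. So the left side is an odd positive square integer.

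\emph{Step 3: the valuation.} With $\Omega_p$ in place of $\Omega_p^+$, the normalized coefficient $L^{(r)}(E_p,1)/(r!\,\Omega_p\Reg_{E_p})$ equals $\#\Sha(E_p/\Q)/2$ by \eqref{eq:explicitbsd}. Since $\#\Sha$ is odd, this rational number has $2$-adic valuation exactly $-1$.

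I expect no real obstacle beyond the component-swapping argument in Step 1. Everything else is substitution into Theorem~\ref{thm:main}.
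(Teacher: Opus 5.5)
Your proposal is correct and follows the paper's argument. Translation by a $2$-torsion point on the non-identity component swaps the two real components and preserves $|\omega_p|$, so $\Omega_p^+=\Omega_p/2$; you usefully make explicit that $(0,0)$ lies on the oval over $[0,2]$, which the paper leaves implicit. The rest is substitution into \eqref{eq:explicitbsd}, using the oddness and squareness of $\#\Sha(E_p/\Q)$ from Theorem~\ref{thm:main}.
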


\begin{proof}
Translation identifies the two real components and preserves the
invariant measure. Thus each has half the total period. The claims
follow from \eqref{eq:explicitbsd}.
\end{proof}

\section{What remains when \texorpdfstring{$p\equiv1\pmod8$}{p is 1 modulo 8}}\label{sec:remaining}

The Selmer dimension in this case gives a particularly concrete
form of the remaining issue.

\begin{proposition}\label{prop:remaining}
Suppose $p\equiv1\pmod8$. Exactly one of the following alternatives
holds, where the displayed isomorphisms are isomorphisms of abelian
groups:
\begin{equation}\label{eq:fourcases}
\begin{array}{ccl}
 \rho_2(E_p)&r(E_p)&\Sha(E_p/\Q)[2^\infty]\\[2pt]
 \hline
 0&0&(\Z/2^a\Z)^2\quad\text{for some }a\ge1,\\
 2&0&(\Q_2/\Z_2)^2,\\
 2&1&\Q_2/\Z_2,\\
 2&2&0.
\end{array}
\end{equation}
Consequently, the $p\equiv1\pmod8$ part of
\eqref{eq:conjecture} is equivalent to excluding the third row.
In particular, finiteness of $\Sha(E_p/\Q)[2^\infty]$ is a
sufficient additional hypothesis for $r(E_p)\in\{0,2\}$.
\end{proposition}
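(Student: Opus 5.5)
The plan is to feed $s_2(E_p)=2$ from Proposition~\ref{prop:classical} into Lemma~\ref{lem:parity} and then enumerate the possibilities. The identity \eqref{eq:parityidentity} reads $2=\rho_2+2m$ with $m\ge0$, which leaves two cases: $(\rho_2,m)=(0,1)$ or $(2,0)$. We then combine this with $\rho_2=r+\delta$ from \eqref{eq:finitequotient} and with the decomposition $G=\Sha(E_p/\Q)[2^\infty]\simeq D\oplus F$, where $D\simeq(\Q_2/\Z_2)^\delta$.

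\emph{Case $\rho_2=0$.} Here $r=\delta=0$, so $D=0$ and $G=F$. Since $m=1$, \eqref{eq:finitequotient} gives $F\simeq(\Z/2^{a}\Z)^2$ with $a\ge1$. This is the first row.

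\emph{Case $\rho_2=2$.} Here $m=0$, so $F=0$ and $G=D\simeq(\Q_2/\Z_2)^\delta$ with $\delta=2-r$. Taking $r=0,1,2$ gives the remaining three rows. The row determines $\rho_2$, $r$ and the isomorphism type of $G$, and these types are pairwise distinct (for instance $(\Z/2^a\Z)^2$ is finite and nonzero), so exactly one row holds.

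For the consequences, recall that the $p\equiv1\pmod 8$ clause of \eqref{eq:conjecture} asserts $r\in\{0,2\}$. Rows one, two and four have $r\in\{0,2\}$, and row three has $r=1$. The conjectural clause therefore holds exactly when row three is excluded. If $\Sha(E_p/\Q)[2^\infty]$ is finite, then $\delta=0$, which rules out rows two and three; this gives the sufficiency statement.

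The one point needing care is to use only abelian-group isomorphisms, since the splitting $G\simeq D\oplus F$ is non-canonical, as noted before Lemma~\ref{lem:parity}. Otherwise the argument is bookkeeping, and I do not expect a real obstacle.
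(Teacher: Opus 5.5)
Your proposal is correct and follows the paper's proof essentially verbatim. You substitute $s_2(E_p)=2$ into Lemma~\ref{lem:parity}, split into $(\rho_2,m)=(0,1)$ or $(2,0)$, and read off the rows from $\rho_2=r+\delta$ and the structure of $F$; your explicit treatment of exclusivity and of the finiteness consequence (via $\delta=0$) just spells out what the paper leaves implicit.
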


\begin{proof}
Here $s_2(E_p)=2$. In Lemma~\ref{lem:parity}, either
$(\rho_2,m)=(0,1)$ or $(2,0)$. In the first case $r=\delta=0$
and the finite quotient is one pair $(\Z/2^a\Z)^2$ with $a\ge1$.
In the second case the finite quotient vanishes and $r+\delta=2$,
which gives the other three rows. Only the third row has odd rank.
\end{proof}

\begin{remark}\label{rem:scope}
The alternatives in \eqref{eq:fourcases} are a classification of what
the argument permits, not a claim that all four occur. In particular,
no example of an infinite Tate--Shafarevich group is asserted. The
Cassels--Tate pairing on the finite quotient does not exclude a single
divisible summand $\Q_2/\Z_2$.
\end{remark}

\begin{corollary}\label{cor:evenlow}
Assume Hypothesis~\ref{hyp:bsd}. If $p\equiv1\pmod8$ and
$\rho_2(E_p)=0$, then $r_{\an}(E_p)=r(E_p)=0$,
$\Sha(E_p/\Q)$ is finite, and
\[
 L(E_p,1)=\frac{\Omega_p}{2}\,\#\Sha(E_p/\Q).
\]
Its $2$-primary part is $(\Z/2^a\Z)^2$ for some $a\ge1$.
\end{corollary}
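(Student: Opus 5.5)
The plan is to feed the hypothesis $\rho_2(E_p)=0$ into Hypothesis~\ref{hyp:bsd} and then read the $2$-primary structure off the first row of Proposition~\ref{prop:remaining}.

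First, since $\rho_2(E_p)=0\in\{0,1\}$, Hypothesis~\ref{hyp:bsd} applies directly. It gives $r(E_p)=r_{\an}(E_p)=0$ and the finiteness of $\Sha(E_p/\Q)$. It also gives the exact formula \eqref{eq:generalbsd} with $r=0$ and $\Reg_{E_p}=1$, that is,
\[
 L(E_p,1)=\frac{\Omega_{E_p}\,\#\Sha(E_p/\Q)\prod_\ell c_\ell(E_p)}{\#E_p(\Q)_{\tors}^{2}}.
\]

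Next, I would substitute the local data. Proposition~\ref{prop:local} holds for every twin prime $p\ge7$, so it applies in the class $p\equiv1\pmod 8$ as well. It shows that the equation is globally minimal, that $dx/(2y)$ is a minimal N\'eron differential (so $\Omega_{E_p}=\Omega_p$), that $\prod_\ell c_\ell=8$, and that $\#E_p(\Q)_{\tors}=4$. The factor is therefore $8/16=1/2$, which yields $L(E_p,1)=\tfrac{\Omega_p}{2}\#\Sha(E_p/\Q)$. This step repeats the corresponding step in the proof of Theorem~\ref{thm:main}.

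Finally, for the $2$-primary part I would use Proposition~\ref{prop:remaining}. When $\rho_2=0$, only the first row of \eqref{eq:fourcases} is possible, so $\Sha(E_p/\Q)[2^\infty]\simeq(\Z/2^a\Z)^2$ with $a\ge1$. This also follows directly from Lemma~\ref{lem:parity}: with $s_2=2$ and $\rho_2=0$ we get $m=1$ and $\delta=0$, so $G=F$ is a single pair of equal cyclic groups. None of the steps is a real obstacle. The one point to check is that the case hypothesis $p\equiv1\pmod 8$ does not change the local invariants, and it does not, because the proof of Proposition~\ref{prop:local} never used the residue class modulo $8$.
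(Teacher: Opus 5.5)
Your proposal is correct and is essentially the paper's argument. The paper's proof likewise applies Hypothesis~\ref{hyp:bsd} (legitimate since $\rho_2(E_p)=0$), substitutes $\prod_\ell c_\ell(E_p)=8$ and $\#E_p(\Q)_{\tors}^2=16$ from Proposition~\ref{prop:local}, and reads the $2$-primary part off the first row of \eqref{eq:fourcases}; your direct derivation of that row from Lemma~\ref{lem:parity} (with $s_2=2$ and $\rho_2=0$ forcing $m=1$, $\delta=0$) is exactly how the paper obtains it.
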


\begin{proof}
Apply Hypothesis~\ref{hyp:bsd}, Proposition~\ref{prop:local}, and
the first row of \eqref{eq:fourcases}.
\end{proof}

Hypothesis~\ref{hyp:bsd} says nothing when $\rho_2(E_p)=2$.
In particular, it cannot be invoked merely from an a priori bound
$r(E_p)\le2$ or from the condition $r(E_p)=1$.
The unhandled case in \eqref{eq:conjecture} is therefore a genuine
logical boundary of this application.

\appendix
\section{A local descent in the original coordinates}\label{app:descent}

We give a direct proof of Proposition~\ref{prop:classical}. Its
purpose is to verify the finite-level input explicitly; the Selmer
dimensions themselves are the known results of \cite{QZ}. The
isogeny chosen here has kernel generated by $(0,0)$ on
\eqref{eq:curve}, rather than the kernel used on
\eqref{eq:translation} in \cite{QZ}.

Let $\phi:E_p\to E'_p$ be the degree-two isogeny with this kernel,
where
\[
 E'_p:\quad Y^2=X^3+2(p+2)X^2+q^2X,
 \qquad \hphi\phi=[2].
\]
Put $S_\phi=\Sel^\phi(E_p/\Q)$ and
$S_{\hphi}=\Sel^{\hphi}(E'_p/\Q)$; the curve in the notation is
always the \emph{source} of the isogeny. The usual isogeny descent
identifies these groups with locally soluble square classes for
\begin{align}
 C_d:\quad dW^2&=d^2U^4+2(p+2)dU^2V^2+q^2V^4,
 \label{eq:C}\\
 D_d:\quad dW^2&=d^2U^4-(p+2)dU^2V^2+2pV^4,
 \label{eq:D}
\end{align}
respectively \cite[Chapter X, Proposition 4.9]{Silverman}.
The equations are understood as genus-one double covers of the
$(U:V)$-line; a local point has $(U,V)\ne(0,0)$. Square classes are
represented by signed squarefree integers.

\subsection{Support and the dual Selmer group}

Local solubility forces
\begin{equation}\label{eq:support}
 S_\phi\subseteq\{1,-1,q,-q\},\qquad
 S_{\hphi}\subseteq\{1,2,p,2p\}.
\end{equation}
Here is a valuation proof of the support assertion. In an equation
\[
 dW^2=d^2U^4+bdU^2V^2+cV^4
\]
with integral $b,c$ and squarefree $d$, suppose a prime $\ell$ divides
$d$ but not $c$. Scale so that $U,V\in\Z_\ell$ and at least one is a
unit. The equation then forces $W\in\Z_\ell$. Reduction modulo
$\ell$ gives $\ell\mid V$, hence $U$ is a unit. The valuations of
the three terms on the right are respectively $2$, at least $3$,
and at least $4$. Thus the right side has valuation two, whereas
$\ord_\ell(dW^2)$ is odd, a contradiction. Applying this to
\eqref{eq:C} and \eqref{eq:D} proves that the squarefree
representatives divide $q^2$ and $2p$, respectively. If $d<0$ in
\eqref{eq:D}, the right side is strictly positive over $\R$ whenever
$(U,V)\ne(0,0)$, while the left side is nonpositive. This removes the
negative classes in the second group.

The dual Kummer map sends
\[
 O\mapsto1,\quad (0,0)\mapsto2p,\quad
 (2,0)\mapsto2,\quad(p,0)\mapsto p.
\]
Consequently,
\begin{equation}\label{eq:dualexact}
 S_{\hphi}=\{1,2,p,2p\},
\end{equation}
and every element is represented by a rational $2$-torsion point.

\subsection{Local obstructions in \texorpdfstring{$S_\phi$}{the phi-Selmer group}}

If $p\equiv3\pmod4$, the class $-1$ is excluded at $p$. Indeed,
\eqref{eq:C} for $d=-1$ can be written
\begin{equation}\label{eq:normobstruction}
 \bigl(U^2-(p+2)V^2\bigr)^2+W^2=8pV^4.
\end{equation}
Over $\Q_p$, a nonzero sum of two squares has even valuation because
$-1$ is not a square in the residue field. If $V\ne0$, the right
side has odd valuation; if $V=0$, the equation forces $U=W=0$.
Both are impossible.

If $\bigl(\frac2q\bigr)=-1$, both $q$ and $-q$ are excluded at
$q$. For $d=\eps q$, with $\eps\in\{1,-1\}$,
\eqref{eq:C} becomes
\begin{equation}\label{eq:qobstruction}
 W^2=\eps q(U^4+V^4)+2(p+2)U^2V^2.
\end{equation}
Normalize $U,V$ to be integral with at least one a unit. If both are
units, reduction modulo $q$ gives
$W^2\equiv8U^2V^2$, contrary to $\bigl(\frac2q\bigr)=-1$.
If just one is a unit, the right side has valuation one, again
impossible. This excludes both classes when $p\equiv5,7\pmod8$.

\subsection{The remaining locally soluble classes}

We first show that $-1\in S_\phi$ when $p\equiv1\pmod4$.
Writing its equation as
\[
 W^2=-U^4+2(p+2)U^2V^2-q^2V^4,
\]
one obtains the following local points. At $p$, take $V=0,U=1$,
since $-1$ is a square in $\Q_p$. At $2$, take $U=V=1$. The
right side is
\[
 8-(p-3)^2=4\left(2-\left(\frac{p-3}{2}\right)^2\right),
\]
and the expression in parentheses is $1\pmod8$ because
$(p-3)/2$ is odd. It is therefore a square in $\Q_2$.

At $q$ one has $q\equiv3\pmod4$. Surjectivity of the norm
$\F_{q^2}^{\times}\to\F_q^{\times}$ gives $q+1$ solutions
$(u,t)\in\F_q^2$ to $u^2+t^2=8$. At most four have a zero
coordinate, and $q\ge5$, so choose one with $ut\ne0$.
Then $(U,V,W)=(u,1,ut)$ satisfies the equation modulo $q$.
The derivative with respect to $W$ is nonzero, so Hensel's lemma
lifts the point to $\Q_q$. Over $\R$, take $V=1$ and
$U^2=p+2$, for which $W^2=8p>0$.

Next suppose $p\equiv1,3\pmod8$. For $d=q$ the equation is
\[
 W^2=q(U^4+V^4)+2(p+2)U^2V^2.
\]
At $p$, the choice $U=1,V=0$ works because
$\bigl(\frac qp\bigr)=\bigl(\frac{-2}p\bigr)=1$.
At $q$, take $U=V=1$, obtaining $W^2=4p$; this works because
$\bigl(\frac pq\bigr)=\bigl(\frac2q\bigr)=1$.
At $2$, if $p\equiv3\pmod8$, use $U=1,V=0$ and $q\equiv1\pmod8$.
If $p\equiv1\pmod8$, use $U=V=1$ and $W^2=4p$.
A real point exists since $q>0$.

For both quartics just considered, the discriminant of the polynomial
on the right is $2^{14}p^2q^2$. At any prime
$\ell\notin\{2,p,q\}$, the associated projective double cover
therefore has smooth genus-one reduction. The Hasse--Weil bound gives at least
$\ell+1-2\sqrt{\ell}>0$ rational points on the special fiber, and
smoothness lifts such a point to a $\Q_\ell$-point.
This completes the local checks at every place.

Combining membership, the obstructions, and the group law on square
classes gives
\begin{equation}\label{eq:phitable}
 S_\phi=
 \begin{cases}
 \{1,-1,q,-q\},&p\equiv1\pmod8,\\
 \{1,q\},&p\equiv3\pmod8,\\
 \{1,-1\},&p\equiv5\pmod8,\\
 \{1\},&p\equiv7\pmod8.
 \end{cases}
\end{equation}
For example, in the second line the membership of $q$ and the
exclusion of $-1$ also exclude $-q$.

\subsection{Recovering the full \texorpdfstring{$2$}{2}-Selmer group}

The cohomology sequence for
$0\to E_p[\phi]\to E_p[2]\to E'_p[\hphi]\to0$, with its
local conditions, gives an exact sequence through the middle terms
\begin{equation}\label{eq:isogenyselmer}
 0\longrightarrow
 \frac{E'_p(\Q)[\hphi]}{\phi(E_p(\Q)[2])}
 \longrightarrow S_\phi
 \longrightarrow\Sel_2(E_p/\Q)
 \longrightarrow S_{\hphi}.
\end{equation}
The quotient on the left vanishes: $E_p[2]$ is rational and its
image under $\phi$ is the order-two group $E'_p[\hphi]$.
The last map in \eqref{eq:isogenyselmer} is surjective in this
particular family. Indeed, by \eqref{eq:dualexact} each of its target
classes is the dual Kummer class of a rational $2$-torsion point,
and hence is the image of that point's ordinary $2$-Kummer class.
Thus
\[
 0\longrightarrow S_\phi\longrightarrow\Sel_2(E_p/\Q)
 \longrightarrow S_{\hphi}\longrightarrow0
\]
is short exact. Equations~\eqref{eq:dualexact} and
\eqref{eq:phitable} now give \eqref{eq:selmerdim}.
\qed

\section*{Acknowledgment of assistance}
This assistance is distinct from the external BSD input
\cite{OAI}, whose use is explicitly conditional throughout the paper.


\begin{thebibliography}{99}

\bibitem{Hatley}
Jeffrey Hatley,
\emph{On the rank of the elliptic curve $y^2=x(x-p)(x-2)$},
preprint (2009),
\href{https://arxiv.org/abs/0909.1614v1}{arXiv:0909.1614v1}.

\bibitem{Joshi}
Kirti Joshi,
\emph{On the analytic rank of the twin prime elliptic curve
$y^2=x(x-2)(x-p)$},
preprint (2025),
\href{https://arxiv.org/abs/2508.12340v1}{arXiv:2508.12340v1}.

\bibitem{OAI}
OpenAI,
\emph{Exact Birch--Swinnerton-Dyer formula from low Selmer corank},
OpenAI Math Release preprint, October 3, 2026.
\href{https://github.com/openai/math/blob/main/preprints/Exact-Birch-Swinnerton-Dyer-Formula-from-Low-Selmer-Corank-October-3-2026/exact-bsd-low-selmer-corank.pdf}{Manuscript in the OpenAI mathematics collection};
\href{https://github.com/openai/math/blob/main/CONTENTS.md}{announced statement in result family 002}.
Release consulted October 7, 2026.
The stated low-corank implication is used here as
Hypothesis~\ref{hyp:bsd}.

\bibitem{PS}
Bjorn Poonen and Michael Stoll,
\emph{The Cassels--Tate pairing on polarized abelian varieties},
Annals of Mathematics (2) \textbf{150} (1999), no.~3, 1109--1149;
\href{https://arxiv.org/abs/math/9911267}{arXiv:math/9911267}.

\bibitem{QZ}
Derong Qiu and Xianke Zhang,
\emph{Mordell--Weil groups and Selmer groups of twin-prime elliptic curves},
Science in China Series A: Mathematics \textbf{45} (2002), no.~11,
1372--1380,
\href{https://doi.org/10.1007/BF02880031}{doi:10.1007/BF02880031}.
Preprint version entitled \emph{Mordell--Weil groups and Selmer groups
of two types of elliptic curves},
\href{https://arxiv.org/abs/math/0103243v1}{arXiv:math/0103243v1} (2001).
The theorem numbers cited in this note refer to that preprint version.

\bibitem{Silverman}
Joseph H. Silverman,
\emph{The arithmetic of elliptic curves},
Graduate Texts in Mathematics, vol.~106,
Springer-Verlag, New York, 1986.

\end{thebibliography}
\end{document}